\documentclass[12pt]{amsart}
\usepackage[english]{babel}
\usepackage{amsmath,amssymb,amsthm,latexsym,amsfonts}
\usepackage[mathscr]{eucal}
\usepackage{graphicx}
\usepackage{tikz}
\usepackage{longtable,array}
\pagestyle{plain}
\sloppy

\parindent=0.4cm
\textheight=230mm \textwidth=165mm \headsep=0.0in \headheight=-5mm
\topmargin=0mm \oddsidemargin=5mm \evensidemargin=-5mm
\theoremstyle{plain}
\newtheorem{lemma}{Lemma}
\newtheorem{theorem}[lemma]{Theorem}

\newtheorem{definition}{Definition}
\newtheorem{remark}{Remark}

\newcommand{\C}{\mathcal C}

\newcommand{\G}{\Gamma}

\newcommand{\matr} [4] {\left(\begin{array}{@{}c@{\ }c@{}} #1 & #2 \\ #3 & #4 \\ \end{array} \right)}

\newcommand{\seifuno}[3]{\big(#1,({\scriptstyle #2},{\scriptstyle #3})\big)}

\newcommand{\seifdue}[5]{\big(#1,({\scriptstyle #2},{\scriptstyle #3}),
                       ({\scriptstyle #4},{\scriptstyle #5})\big)}

\newcommand{\bigu}[4]{\bigcup\nolimits_{{\tiny{\matr {#1} {#2} {#3} {#4}}}\phantom{\Big|}\!\!}}

\begin{document}

\title{Minimal $4$-colored graphs representing an infinite family of hyperbolic $3$-manifolds}

\author{P. CRISTOFORI, E. FOMINYKH, M. MULAZZANI, V. TARKAEV}

\address{Paola CRISTOFORI -  Dipartimento di Scienze Fisiche, Informatiche e Matematiche, Universit\`a di Modena e Reggio Emilia, Italy}
\email{paola.cristofori@unimore.it}

\address{Evgeny FOMINYKH, Vladimir TARKAEV - Krasovskii Institute of Mathematics and Mechanics, Ural Branch of the Russian Academy of Sciences, Ekaterinburg, Russia and Department of Mathematics, Chelyabinsk State University, Chelyabinsk, Russia.}
\email{efominykh@gmail.com\quad v.tarkaev@gmail.com}

\address{Michele MULAZZANI - Dipartimento di Matematica and ARCES,  Universit\`a di Bologna, Italy}
\email{mulazza@dm.unibo.it}

\maketitle

\centerline{\textit{To Professor Maria Teresa Lozano on the occasion of her 70th birthday}}

\begin{abstract} The graph complexity of a compact $3$-manifold is defined as the minimum order among all $4$-colored graphs representing it. 
Exact calculations of graph complexity have been already performed, through tabulations, for closed orientable manifolds (up to graph complexity $32$) and for compact orientable 3-manifolds with toric boundary (up to graph complexity $12$) and for infinite families of lens spaces. 

In this paper we extend to graph complexity $14$ the computations for orientable manifolds with toric boundary and we give  two-sided bounds for the graph complexity of tetrahedral manifolds. As a consequence, we compute the exact value of this invariant for an infinite family of such manifolds.

\bigskip
\medskip

\noindent {\it 2010 Mathematics Subject Classification:} 57N10, \ 57Q15, \ 57M15.

\smallskip

\noindent {\it Key words and phrases:} $3$-manifolds, \ colored graphs, \ graph complexity, \ tetrahedral manifolds.
\end{abstract}

\section{Introduction}
Representation tecniques have long been used as an important tool in the study of PL manifolds. The theory of {\it crystallizations}, or more generally of {\it gems}, was introduced as a combinatorial representation of closed PL manifolds of arbitrary dimension by means of a particular class of edge-colored graphs (see \cite{[FGG]}). This tool has been proved to be particularly effective in dimension three adding to classical representation methods such as Heegaard diagrams, spines, framed knots and links, branched coverings, etc...

More recently, the representation by edge-colored graphs has been extended in \cite{[CM]} to non-closed compact $3$-manifolds. 
More precisely, it has been proved that there is a well-defined surjective map from the whole set of {\it $4$-colored graphs} -- i.e., $4$-regular graphs equipped with an {\it edge-coloration} 
(see Subsection \ref{colgraph}) -- to the set of $3$-manifolds that are either closed or have non-empty boundary with no spherical components.

In this context, it is natural to pose the problem of determining and listing 
minimal (with respect to the number of vertices) $4$-colored graphs representing $3$-manifolds. 
The order of a minimal graph $\Gamma$ is called the {\it graph complexity} of the represented manifold $M_\Gamma$. 

By the duality between $4$-colored graphs and a particular kind of vertex-labeled pseudotriangulations (called {\it colored triangulations}), graph complexity of manifolds turns out to be also the number of tetrahedra in a minimal triangulation of this type (see details in Subsection \ref{colgraph}). 

The graph complexity of a manifold is an important invariant in the theory of $3$-manifolds and the problem of its computation is usually very difficult.
Exact values of graph complexity can be obviously computed by enumerating $4$-colored graphs with increasing number of vertices and identifying the represented manifolds. This has been done first in the closed case and more recently 
in the case of non-empty boundary. In particular, there exist tables of 
\begin{itemize}
 \item [(a)] closed orientable $3$-manifolds up to graph complexity $32$ (\cite{[CC],[CC1],[Li]});
 \item [(b)] closed non-orientable $3$-manifolds  up to graph complexity $30$  (\cite{[BCG]}, \cite{[C1]});
 \item [(c)] compact orientable $3$-manifolds with toric boundary up to graph complexity $12$  (\cite{[CFMT]}).
\end{itemize}

As regards the computation of graph complexity for infinite families of $3$-manifolds, few results have been obtained up to now. It is proved in \cite{[CC1]} that lens spaces of the form 
$L(qr + 1,q)$, with $q,r\ge 1$ odd, have graph complexity $4(q + r)$,  
while concrete examples of minimal graphs for the same family are constructed in \cite{[BD]}. 

In Section \ref{sec 4} of this paper we extend table (c) to graph complexity $14$. Moreover, in Section \ref{sect: exact values} we give two-sided bounds for the graph complexity of compact tetrahedral manifolds (i.e., manifolds admitting a triangulation by regular ideal hyperbolic tetrahedra). 
On the basis of this result we construct an infinite family of minimal $4$-colored graphs representing tetrahedral manifolds and, hence, compute the exact value of graph complexity for these manifolds.

\section{Preliminaries}
\subsection{Triangulations}\label{triang} 
Let $\mathcal D = \{\tilde\Delta_1, \ldots, \tilde\Delta_n\}$ be a collection of pairwise-disjoint tetrahedra and suppose $\Phi = \{\varphi_1, \ldots, \varphi_{2n}\}$ is a family of affine homeomorphisms pairing faces of the tetrahedra in $\mathcal D$ so that every face has a unique counterpart. It is allowed that faces in each pair belong either to different tetrahedra or to the same tetrahedron.
We use $\mathcal D/ \Phi$ to denote the space obtained from the disjoint union of the tetrahedra of $\mathcal D$ by identifying all the faces via the homeomorphisms of~$\Phi$. 

It is well known that, by the previous assumptions, the identification space $\mathcal D/ \Phi$ is a $3$-manifold except possibly at the images of some vertices and at the center of some edges of the tetrahedra $\tilde\Delta_i$ under the projection $p: \cup_i \tilde\Delta_i \to \mathcal D/ \Phi.$ 

In the following we restrict our attention to the cases where the singularities of $\mathcal D/ \Phi$ only appear at the images of the vertices.
This happens, for example, when all homeomorphisms of $\Phi$ are orientation-reversing with respect to a fixed orientation of the tetrahedra of $\mathcal D$, and therefore the complement of the singularities is an orientable $3$-manifold.

We collect all these information into a single symbol $\mathcal T$ and call $\mathcal T$ a triangulation of $\mathcal D/ \Phi$; moreover, we also use $|\mathcal T|$ to denote the space $\mathcal D/ \Phi$.   
In the literature this kind of triangulation is often called pseudo- or singular triangulation. 
A tetrahedron, face, edge, or vertex of this triangulation is, respectively, the image of a tetrahedron, face, edge, or vertex of the tetrahedra of $\mathcal D $. 
We will denote the image of the vertices by $\mathcal T^{(0)}$.
 
The link of each vertex of $\mathcal T$ is either a $2$-sphere (such a vertex is called {\it regular}) or a closed surface distinct from the $2$-sphere (such a vertex is called {\it singular}). 
Denote by $\mathcal T^{(0)}_s \subseteq \mathcal T^{(0)}$ the set of the singular vertices of $\mathcal T$. If $\mathcal T^{(0)}_s = \emptyset$, then $\mathcal T$ is a triangulation of the closed orientable $3$-manifold $M = |\mathcal T|$. 
If $\mathcal T^{(0)}_s \neq \emptyset$, we say $\mathcal T\setminus \mathcal T^{(0)}_s$ is a triangulation of the noncompact $3$-manifold $\hat M = |\mathcal T| \setminus \mathcal |\mathcal T^{(0)}_s|$. 
In some cases when $\mathcal T^{(0)}_s = \mathcal T^{(0)}$, then $\mathcal T\setminus \mathcal T^{(0)}_s$ is an ideal triangulation of $\hat M$ (an example are the tetrahedral manifolds in Subsection~\ref{tetrahedral}).
 
Assume that $\mathcal T^{(0)}_s \neq \emptyset$. Let us replace every tetrahedron of $\mathcal T$ by the corresponding partially truncated one, by removing open regular neighborhoods of all singular vertices of $\mathcal T$. In this way we get a compact $3$-manifold $M$ with nonempty boundary. 
It is obvious that we can identify $\text{Int } M = M\setminus\partial M$ with the noncompact $3$-manifold $\hat M = |\mathcal T| \setminus \mathcal |\mathcal T^{(0)}_s|$. 
In this situation, we also say that $\mathcal T\setminus \mathcal T^{(0)}_s$ is a triangulation of the compact $3$-manifold $M$ with nonempty boundary.

\subsection{From $4$-colored graphs to triangulated compact $3$-manifolds}\label{colgraph}

 \begin{definition} \label{$4$-colored graph}
{\em A {\it $4$-colored graph} is a regular $4$-valent multigraph (i.e., multiple edges are allowed, but loops are forbidden) $\Gamma=(V(\Gamma), E(\Gamma))$ endowed with a map $\gamma: E(\Gamma) \rightarrow \C=\{0,1,2,3\}$ that is injective on adjacent edges.
\footnote{Note that there exist (non-bipartite) $4$-regular multigraphs admitting no coloration of this type.}}
\end{definition}

A $3$-dimensional compact manifold $M_\Gamma$, possibly with non-empty non-spherical boundary, can be associated to any $4$-colored graph $\Gamma$ in the following way:

\begin{itemize}
\item consider a collection $\mathcal D(\Gamma)=\{\tilde\Delta_1, \ldots, \tilde\Delta_n\}$ of tetrahedra in bijective correspondence with $V(\Gamma)$ and label the vertices of each tetrahedron by different elements of $\C$;
\item  for each pair of $c$-adjacent vertices of $\Gamma$ ($c\in\C$), glue the faces of the corresponding tetrahedra that are opposite to the $c$-labeled vertices, so that equally labeled vertices are identified;
\item remove from the resulting $3$-pseudocomplex $K(\G)$ small open neighborhoods of the singular vertices.
\end{itemize}

\smallskip

As a consequence of the construction the pseudocomplex $K(\G)$ inherits a natural vertex-labeling by $\C$ that is injective on each simplex. 

We remark that the above construction is dual to the one introduced in \cite{[CM]}, where it is proved that any compact $3$-manifold without spherical boundary components admits a representation by $4$-colored graphs and that the manifold is orientable if and only if the representing graph is bipartite.

\begin{remark} {\em Note that any $4$-colored graph encodes a triangulation in the sense of Subsection \ref{triang}. 
In fact, given the collection of tetrahedra of $\mathcal D(\G)$, the affine homeomorphisms of the triangulation are defined naturally by the gluings of their faces induced by the vertex-labeling.
Therefore, the construction of the pseudocomplex $K(\G)$ is a particular case of the one described in Subsection \ref{triang}.
Note also that in this case no singularities can arise at the images of the centres of the edges.

When the graph is bipartite the tetrahedra of $\mathcal D(\G)$ can be subdivided into two classes according to the bipartition classes of the corresponding vertices of $\G$ and, by giving to the tetrahedra of one class the orientation induced by the cyclic permutation $(0\ 1\ 2\ 3)$ of the labels of their vertices, and to the tetrahedra of the other class the opposite orientation, all the affine homeomorphisms of the triangulation turn out to be orientation-reversing; as a consequence the resulting manifold is orientable.}
\end{remark}

\subsection{Graph and tetrahedral complexities of $3$-manifolds}

 A $4$-colored graph $\Gamma$ is called {\it minimal} if there exists no graph representing $M_{\Gamma}$ with less vertices than $\Gamma$. 

\begin{definition} {\em The {\it graph complexity} of a compact $3$-manifold $M$, denoted by $c_{g}(M)$, is the number of vertices in a minimal $4$-colored graph representing $M$.}\end{definition} 

In case $M$ is a closed manifold a notion of complexity in terms of colored graphs has been already introduced in \cite{[Li]}: it is called gem-complexity, denoted by $k(M)$, and the relation between the two invariants is $c_g(M) = 2k(M)+ 2.$
 
A triangulation of a compact $3$-manifold $M$ into tetrahedra is {\it minimal} if there is no triangulation of $M$ into fewer tetrahedra. The {\it tetrahedral complexity} $c_{tet}(M)$ of $M$ is the number of tetrahedra in a minimal triangulation.

The next result gives an inequality relating the complexities $c_{tet}$ and $c_{g}$.
 
\begin{lemma}
 \label{2complexities}
 For every compact $3$-manifold $M$ we have $c_{tet}(M) \leq c_{g}(M)$.
\end{lemma}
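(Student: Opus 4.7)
The plan is to exploit the construction described in Subsection~\ref{colgraph}, which already associates to every $4$-colored graph $\Gamma$ a triangulation (in the sense of Subsection~\ref{triang}) of the manifold $M_\Gamma$ whose number of tetrahedra equals the number of vertices of $\Gamma$. Once this correspondence is spelled out, the inequality is essentially immediate.

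More concretely, I would start from a minimal $4$-colored graph $\Gamma$ representing $M$, so that $|V(\Gamma)|=c_g(M)$. The recipe recalled in Subsection~\ref{colgraph} takes the collection $\mathcal D(\Gamma)=\{\tilde\Delta_1,\ldots,\tilde\Delta_n\}$ of $n=c_g(M)$ labelled tetrahedra, together with the face identifications $\Phi$ induced by the $c$-adjacencies in $\Gamma$ ($c\in\C$), and yields the pseudocomplex $K(\Gamma)$; removing open regular neighborhoods of the singular vertices (if any) produces the compact manifold $M_\Gamma=M$. As observed in the Remark following the construction, this data $\mathcal T=(\mathcal D(\Gamma),\Phi)$ is a triangulation of $M$ in the sense of Subsection~\ref{triang}, with $c_g(M)$ tetrahedra (the truncation of singular vertices does not change the tetrahedron count, since each truncated tetrahedron still corresponds to exactly one original $\tilde\Delta_i$).

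Therefore $M$ admits a triangulation with $c_g(M)$ tetrahedra, and by definition of minimality $c_{tet}(M)\leq c_g(M)$.

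I do not expect any serious obstacle: the whole content of the statement is already built into the construction of $M_\Gamma$ from $\Gamma$. The only point that deserves an explicit sentence is to remark that the (possibly singular/partially truncated) triangulation underlying $\mathcal D(\Gamma)/\Phi$ is admissible for the definition of $c_{tet}$, so that the inequality follows directly from bijecting vertices of the minimal graph with tetrahedra of the induced triangulation.
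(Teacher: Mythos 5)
Your proposal is correct and follows exactly the paper's own argument: take a minimal $4$-colored graph, note that the construction of Subsection~\ref{colgraph} turns it into a triangulation of $M$ with $c_g(M)$ tetrahedra, and conclude by minimality of $c_{tet}$. The extra remarks you add (the truncation not changing the tetrahedron count, admissibility of the induced triangulation) are just a more explicit version of the same one-line proof.
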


\begin{proof}
 Consider a minimal $4$-colored graph $\G$ representing the manifold $M$. 
 By definition, $\G$ has $c_{g}(M)$ vertices. Therefore, the graph $\G$ determines a triangulation of $M$ with $c_{g}(M)$ tetrahedra. 
 This implies that $c_{tet}(M) \leq c_{g}(M)$. 
\end{proof}

 In Section \ref{sect: exact values} we will apply Lemma \ref{2complexities} in order to find lower bounds for the graph complexity of the
so-called tetrahedral manifolds.

\subsection{Tetrahedral manifolds}\label{tetrahedral}

Let $M$ be a compact $3$-manifold with boundary consisting of tori. Suppose that the interior of $M$, denoted by $Q$, possesses a complete Riemannian metric with finite volume and constant sectional curvature $-1$. 
Following \cite{[FGGTV]}, we say that $M$ is {\it tetrahedral} if there exists a decomposition of $Q$ into ideal regular hyperbolic tetrahedra. Equivalently, there exists an ideal triangulation of $M$ such that each edge class contains exactly six edges of the tetrahedra of~$\mathcal D$.
 
 As mentioned in \cite{[A],[FGGTV],[VTF]}, coverings of tetrahedral manifolds yield infinite families of finite volume hyperbolic $3$-manifolds whose tetrahedral complexity can be calculated exactly. More precisely the following statement holds.
 
\begin{lemma}
 \label{complexityoftetrahedral}
 Let $M$ be a compact tetrahedral manifold such that the interior of $M$ is obtained by gluing together $k$ regular ideal tetrahedra, and let $N$ be an $n$-fold covering of $M$. Then 
 $$ c_{tet}(M) = k \text{\ \ and\ \ } c_{tet}(N) = nk.$$
\end{lemma}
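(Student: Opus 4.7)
The plan is to establish matching upper and lower bounds, relying on the classical fact that the regular ideal hyperbolic tetrahedron realizes the maximum volume $v_3$ among all (finite, ideal, or degenerate) hyperbolic tetrahedra.

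The upper bound is transparent. The given decomposition of $\mathrm{Int}(M)$ into $k$ regular ideal tetrahedra is a triangulation of $M$ in the sense of Subsection \ref{triang} whose singular vertices correspond to the cusps of $\mathrm{Int}(M)$, so $c_{tet}(M)\le k$. Lifting this decomposition along the $n$-fold cover $N\to M$ produces a triangulation of $N$ into $nk$ regular ideal tetrahedra; this simultaneously shows that $N$ is tetrahedral and yields $c_{tet}(N)\le nk$.

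For the lower bound I would use hyperbolic volumes. By Mostow-Prasad rigidity, $\mathrm{Vol}(\mathrm{Int}(M)) = k\,v_3$ and $\mathrm{Vol}(\mathrm{Int}(N)) = nk\,v_3$ are topological invariants. Let $\mathcal T$ be any triangulation of $M$ with $t$ tetrahedra. Removing the singular vertices produces a topological triangulation of $\mathrm{Int}(M)$ which may contain finite (regular) interior vertices. I would then straighten each tetrahedron geodesically via the developing map of the complete hyperbolic structure, mapping singular vertices to ideal points of $\partial\mathbb H^3$ and regular vertices to finite points of $\mathbb H^3$. By the standard volume-sum identity for straightened relative fundamental cycles of a complete finite-volume hyperbolic $3$-manifold, the sum of the signed volumes of the straightened tetrahedra equals $\mathrm{Vol}(\mathrm{Int}(M)) = k\,v_3$. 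Since each straightened tetrahedron has volume at most $v_3$, it follows that $t\ge k$, hence $c_{tet}(M) = k$. The identical argument applied to the cover, whose interior has volume $nk\,v_3$, yields $c_{tet}(N) = nk$.

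The only genuinely delicate point is the volume-sum identity when the triangulation mixes ideal and finite vertices: the classical statements (Thurston, Neumann-Zagier) concern purely ideal triangulations, whereas here we must accommodate interior vertices with spherical links. One handles this by interpreting the total straightened volume as the Kronecker pairing of the straightened relative fundamental class with the hyperbolic volume form, which is a homotopy invariant of the chosen fundamental cycle and so equals $\mathrm{Vol}(\mathrm{Int}(M))$ regardless of where finite vertices are developed. This argument is already carried out in \cite{[FGGTV]} in the same context of bounding $c_{tet}$ for tetrahedral manifolds, so I would quote it directly rather than reprove it.
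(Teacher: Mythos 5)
Your proof is correct and follows essentially the same route as the paper: the upper bound comes from the given decomposition into regular ideal tetrahedra (and its lift to the cover), and the lower bound from the volume estimate $c_{tet}(M)\geq \operatorname{vol}(\mathrm{Int}(M))/v_{tet}$ together with the maximality of the regular ideal tetrahedron's volume. The only difference is that the paper simply quotes this inequality from Anisov's work rather than sketching the straightening argument as you do.
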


\begin{proof}
Let us denote by $Q$ the interior of $M$. Recall that the volume of the regular ideal tetrahedron, that is $v_{tet} = 1.01494\dots$, is maximal among the volumes of all tetrahedra in $\mathbb H^3$. On this property the relation $c_{tet}(M) \geq \operatorname{vol}(Q)/v_{tet}$ mentioned in \cite{[A]} is based. Since $Q$ is obtained by gluing $k$ regular ideal tetrahedra together, its volume $\operatorname{vol}(Q)$ is $kv_{tet}$. Hence, $c_{tet}(M) = k$.
 
Since the class of tetrahedral manifolds is closed under finite coverings, $N$ is a tetrahedral manifold such that the interior of $N$ is obtained by gluing together $nk$ regular ideal tetrahedra. Hence, $c_{tet}(N) = nk$. 
\end{proof}

\section{Exact values and two-sided bounds for the graph complexity of tetrahedral manifolds}
 \label{sect: exact values}

An  {\it $n$-fold covering} between two $4$-colored graphs $G$ and $\G$, where $n=\#V(G)/\#V(\G)$, is a map $f : V(G)\to V(\G)$ that preserves $c$-adjacency of vertices for all $c\in\C$ (i.e., for each pair of $c$-adjacent vertices $a,b\in V(G)$ the vertices $f(a), f(b)$ are $c$-adjacent in $\G$). 

We call a covering {\it admissible} if it is bijective when restricted to the bicolored cycles of the graphs. 
 
The $n$-fold covering $f$ naturally induces a topological $n$-fold (possibly branched) covering $\vert f\vert\ :M_G\to M_{\G}$.  Moreover, $\vert f\vert$ is unbranched if and only if $f$ is admissible. Note also that the triangulation associated to $G$ is the lifting of the one associated to $\G$.

The next result gives two-sided bounds for the graph complexity of compact tetrahedral manifolds.
 
\begin{theorem}
  \label{twosidedbounds}
 Let $\G$ be a $4$-colored graph with $k$ vertices representing a compact tetrahedral manifold $M_\G$ such that the interior of $M_\G$ is obtained by gluing together $d$ regular ideal tetrahedra. Let $G$ be an admissible $n$-fold covering of $\G$. Then
 $$nd \leq c_{g}(M_{G}) \leq nk.$$
\end{theorem}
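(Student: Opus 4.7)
The plan is to split the claim into the two inequalities, both of which follow quickly from the machinery already set up in the paper.

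For the upper bound $c_g(M_G)\le nk$, I would simply observe that an admissible $n$-fold covering $f\colon G\to\Gamma$ has $\#V(G)=n\cdot\#V(\Gamma)=nk$ by definition. Since $G$ is a $4$-colored graph representing $M_G$, the definition of graph complexity as the \emph{minimum} number of vertices of a representing graph gives $c_g(M_G)\le\#V(G)=nk$ at once. No further work is needed here.

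For the lower bound $nd\le c_g(M_G)$, I would chain together Lemma \ref{2complexities} with Lemma \ref{complexityoftetrahedral}. The first gives $c_{tet}(M_G)\le c_g(M_G)$, so it suffices to show $c_{tet}(M_G)=nd$. By hypothesis, $M_\Gamma$ is tetrahedral with interior built from $d$ regular ideal tetrahedra. Because $f$ is admissible, the induced topological map $|f|\colon M_G\to M_\Gamma$ is an \emph{unbranched} $n$-fold covering (as recalled just before the theorem statement). The class of tetrahedral manifolds is closed under finite coverings (this is exactly the fact invoked inside the proof of Lemma \ref{complexityoftetrahedral}), so $M_G$ is itself a tetrahedral manifold, and lifting the ideal triangulation of $M_\Gamma$ yields an ideal triangulation of the interior of $M_G$ into $nd$ regular ideal tetrahedra. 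Applying Lemma \ref{complexityoftetrahedral} to $N=M_G$ over $M=M_\Gamma$ then gives $c_{tet}(M_G)=nd$, and combining with Lemma \ref{2complexities} produces $nd\le c_g(M_G)$.

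The main conceptual step, and the one most likely to need a sentence of justification, is the passage from the combinatorial admissibility of $f\colon G\to\Gamma$ to the assertion that $|f|$ is an unbranched covering of tetrahedral manifolds: one has to note both that bijectivity on bicolored cycles translates into a genuine topological covering (preserving the link structure at the truncated vertices) and that the lifted ideal triangulation still consists of regular ideal hyperbolic tetrahedra, which is immediate because each tetrahedron of $M_G$ is mapped isometrically to a regular ideal tetrahedron of $M_\Gamma$. Everything else is bookkeeping with the two lemmas already proved.
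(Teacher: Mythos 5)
Your proposal is correct and follows essentially the same route as the paper: the upper bound from counting vertices of $G$, and the lower bound by chaining Lemma \ref{2complexities} with Lemma \ref{complexityoftetrahedral} applied to the unbranched covering $M_G\to M_\G$ induced by the admissible graph covering. Your extra remarks on why admissibility yields an unbranched covering of tetrahedral manifolds only make explicit what the paper leaves implicit.
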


\begin{proof}
 Since $G$ is an $n$-fold covering of $\G$, $G$ has $nk$ vertices. This implies that $c_{g}(M_{G}) \leq nk$.

 On the other hand, it follows from Lemma \ref{2complexities} that $c_{tet}(M_{G}) \leq c_{g}(M_{G})$. Since $G$ is an admissible $n$-fold covering of $\G$, $M_{G}$ is an $n$-fold covering of $M_\G$. Thus, by Lemma \ref{complexityoftetrahedral}, we have $c_{tet}(M_{G}) = nd$. 
\end{proof}

 Now we give examples of $4$-colored graphs satisfying the assumptions of Theorem \ref{twosidedbounds}. 
 They allow us to find either the exact values or two-sided bounds for the graph complexity of infinite families of compact tetrahedral manifolds.

\begin{theorem}
 Let $\G$ be the bipartite $4$-colored graph with $12$ vertices of Figure \ref{Gamma_1}. If $G$ is an admissible $n$-fold covering of $\G$, then
 $$c_{g}(M_{G}) = 12n.$$
\end{theorem}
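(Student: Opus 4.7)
The plan is to make the two bounds of Theorem \ref{twosidedbounds} collapse. Since $\Gamma$ has $k=12$ vertices, the upper bound $c_g(M_G)\leq 12n$ is immediate. To match it with a lower bound, I need to show that the interior of $M_\Gamma$ decomposes into exactly $d=12$ regular ideal hyperbolic tetrahedra; then Theorem \ref{twosidedbounds} yields $c_g(M_G)\geq nd = 12n$ and we are done.

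First I would read off from Figure \ref{Gamma_1} the triangulation $K(\Gamma)$ associated to $\Gamma$: its $12$ tetrahedra, the face pairings induced by each colored edge, and the resulting vertex classes. Inspecting the links of the vertices of $K(\Gamma)$ would determine the singular set $K(\Gamma)^{(0)}_s$; I would check that every singular link is a torus, so that $M_\Gamma$ has toric boundary (bipartiteness of $\Gamma$ already ensures orientability).

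The heart of the proof is to verify the tetrahedral condition of Subsection \ref{tetrahedral}: each edge class of $K(\Gamma)\setminus K(\Gamma)^{(0)}_s$ must contain exactly six edges of the tetrahedra of $\mathcal D(\Gamma)$. As is standard for colored-graph triangulations, the edge classes of $K(\Gamma)$ are in bijection with the bicolored cycles of $\Gamma$ (an edge-class whose endpoints are labeled $\{c,c'\}$ corresponds to a bicolored cycle in the complementary pair $\C\setminus\{c,c'\}$), and the valence of such an edge equals the length of the corresponding bicolored cycle. The tetrahedral condition therefore reduces to the purely combinatorial statement that every bicolored cycle of $\Gamma$ has length exactly~$6$. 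Since $\Gamma$ has $12$ vertices and each bicolored subgraph is $2$-regular, this is equivalent to saying that, for each of the six pairs of colors, the corresponding bicolored subgraph splits as a disjoint union of two hexagons: a finite check on Figure \ref{Gamma_1}. Once this is done, the dihedral-angle balance $6\cdot(\pi/3)=2\pi$ together with Thurston's gluing equations show that the ideal triangulation $K(\Gamma)\setminus K(\Gamma)^{(0)}_s$ is realized by $12$ regular ideal hyperbolic tetrahedra, so $M_\Gamma$ is tetrahedral with $d=12$.

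Applying Theorem \ref{twosidedbounds} with $k=d=12$ then gives $12n\leq c_g(M_G)\leq 12n$, whence the equality. The main obstacle is precisely the bicolored-cycle check on the specific graph of Figure \ref{Gamma_1}: it is tedious and unavoidable, since the entire argument hinges on the valence-$6$ property, and a single bicolored cycle of length different from $6$ would break the identification of $M_\Gamma$ as a tetrahedral manifold with $d=12$ and destroy the lower bound.
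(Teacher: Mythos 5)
Your proposal is correct in outline and ends exactly where the paper does --- applying Theorem \ref{twosidedbounds} with $k=d=12$ so that the upper and lower bounds coincide --- but you establish the key hypothesis by a genuinely different route. The paper's proof is a one-line citation: by \cite[Table 3]{[CFMT]}, the graph $\G$ of Figure \ref{Gamma_1} represents the census manifold \texttt{otet12\_00009}, which is glued from $12$ regular ideal tetrahedra. You instead propose a self-contained verification: translate tetrahedrality (every edge class of the ideal triangulation has valence six) into the combinatorial condition that every bicolored cycle of $\G$ has length six, check this on the figure, and invoke the standard fact that a valence-six ideal triangulation is realized by regular ideal tetrahedra. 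Your dictionary is the right one (an edge class with endpoints labeled $\{c,c'\}$ corresponds to a cycle in the complementary colors, of length equal to the edge valence), and the valence-six condition also forces every vertex link to have Euler characteristic zero, so the toric boundary comes for free. Two caveats: the finite check on Figure \ref{Gamma_1} is asserted rather than performed, and to get the complete finite-volume structure you need not only the angle balance $6\cdot(\pi/3)=2\pi$ around edges but also the completeness equations at the cusps --- though with all tetrahedra regular the cusp cross-sections are tiled by equilateral triangles, the induced similarity structure is Euclidean, and completeness is automatic, so this is a gap in exposition rather than in substance. What your approach loses relative to the paper is the identification of $M_\G$ with a specific census manifold (which the paper uses afterwards to describe $M_G$ as a weaving-link complement); what it gains is independence from the census of \cite{[CFMT]}.
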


\begin{proof}
 It follows from \cite[Table 3]{[CFMT]} that $\G$ represents the tetrahedral manifold \verb'otet12_00009', which is obtained by gluing together $12$ regular ideal tetrahedra (see details in \cite{[CFMT]}). The conclusion $c_{g}(M_{G}) = 12n$ now follows from Theorem \ref{twosidedbounds}. 
\end{proof}

In Figure \ref{Gamma_2} we give a concrete example of such a graph $G$. As pointed out in \cite{[OVC]}, $M_{G}$ is the complement of the link in $S^3$ composed by the weaving knot ${\mathcal W}(3,3n)$ and its braid axis (see Figure \ref{Link_n}). As in \cite{[CKP]}, the weaving knot ${\mathcal W}(p,q)$ is the alternating knot or link with the same projection as the standard $p$-braid $(\sigma_1\cdots\sigma_{p-1})^q$ projection of the torus knot or link $T(p,q)$.

\begin{figure}[htb]
\centering{
\includegraphics[scale=1.0]{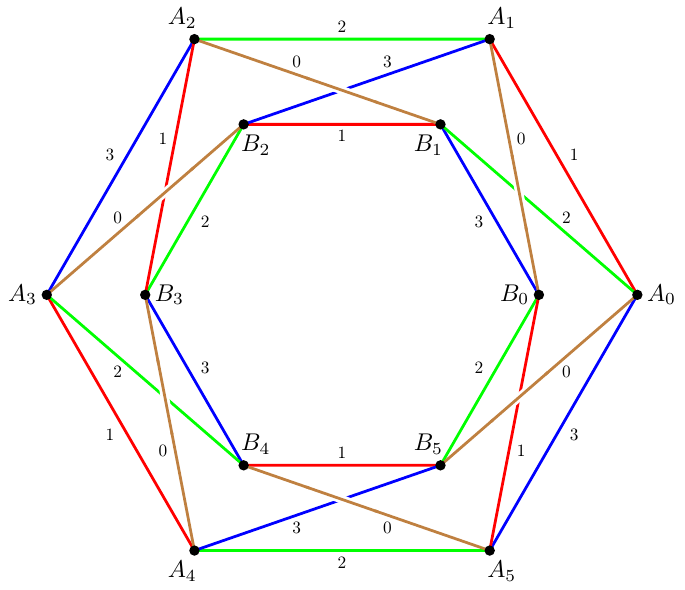}
\caption{The 4-colored graph $\G$.}
\label{Gamma_1}}
\end{figure}

\begin{figure}[htb]
\centering{
\includegraphics[scale=1.0]{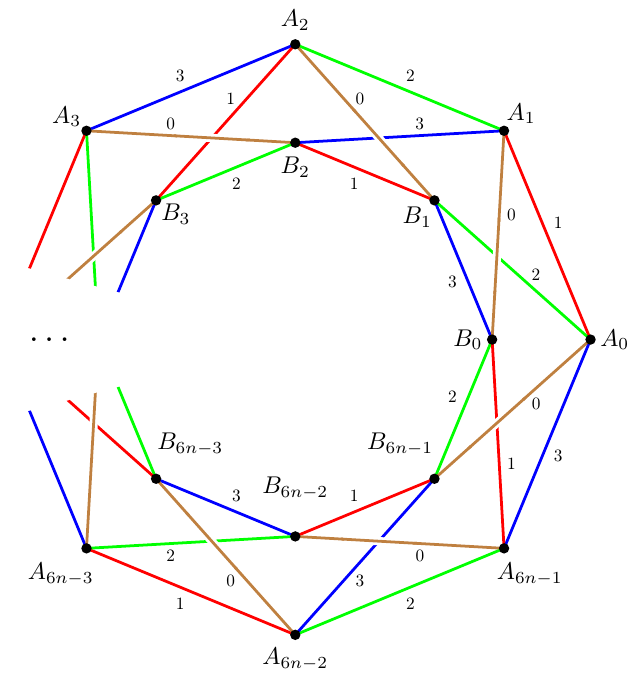}
\caption{An admissible $n$-fold covering of the graph $\G$ depicted in Figure \ref{Gamma_1}.}
\label{Gamma_2}}
\end{figure}

\begin{figure}[htb]
\centering{
\includegraphics[scale=0.8]{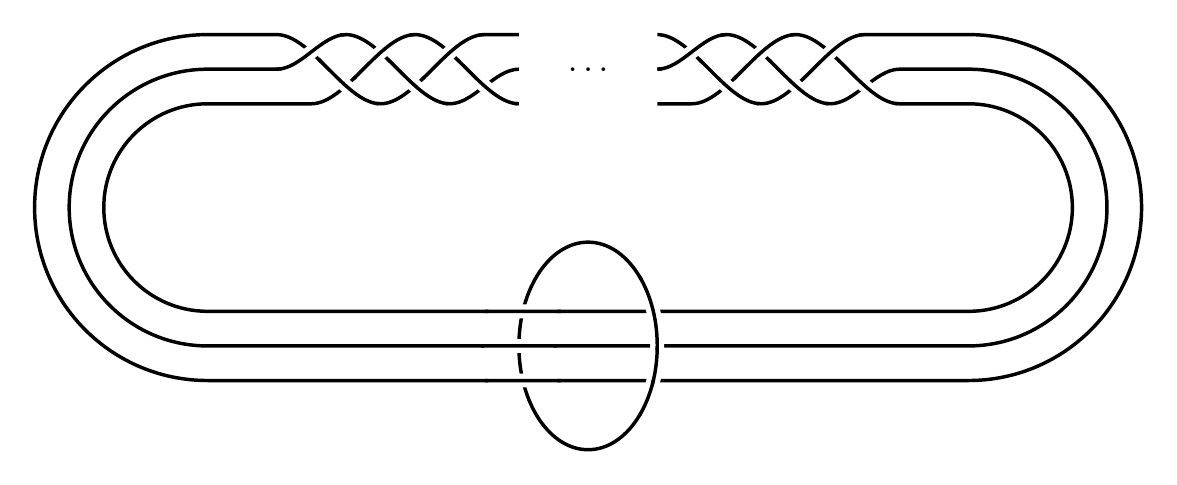}
\caption{The weaving knot ${\mathcal W}(3,3n)$ with the braid axis.}  
\label{Link_n}}
\end{figure}

The {\it code} of a bipartite $4$-colored graph $\G$ with $2p$ vertices is a numerical ``string'' of length $3p$ which completely describes both combinatorial structure and coloration of $\G$.

More precisely, the vertices of $\G$ are divided into the two bipartition classes and labelled by the integers $\{-p,\ldots,-1\}$ and $\{+1,\ldots,+p\}$ respectively. Then, for each $i\in\{1,\dots,p\}$ and $c\in\{1,2,3\}$, the label of the vertex that is $c$-adjacent to $-i$ appears as the $(c-1)p+i$-th character of the string, while $-i$ and $+i$ are assumed to be $0$-adjacent.

Although there are obviously many ways of labeling the vertices and also of permuting the elements of the color set, there exists an algorithm to compute the string such that it uniquely determines $\G$ up to relabeling of the vertices and permutations of the color set (see \cite{[Li]} for details).

When the vertices are few, the code is often displayed by using small letters for negative integers and capital ones for positive integers.

\begin{theorem}

 Let $\G_1, \G_2, \G_3$ be bipartite $4$-colored graphs represented by the following codes: \\
 $\G_1 : DABCFEFEABDCCDEFAB$; \\
 $\G_2 : FABCDEDEFABCCDEFAB$; \\
 $\G_3 : DABCFEFEDABCBCFEDA$. \\
If $G_i$, $1\leq i\leq 3$, is an admissible $n$-fold covering of $\G_i$, then
 $$10n \leq c_{g}(M_{G_i}) \leq 12n.$$
\end{theorem}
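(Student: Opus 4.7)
The strategy is to apply Theorem \ref{twosidedbounds} with $k=12$ and $d=10$. Each code has length $18=3\cdot 6$, so each $\G_i$ has $2p=12$ vertices; hence the upper bound $c_{g}(M_{G_i})\le 12n$ is automatic from the fact that an $n$-fold covering $G_i$ of a $12$-vertex graph has $12n$ vertices. All the real work goes into the lower bound: I must establish that every $M_{\G_i}$ is a compact tetrahedral manifold whose interior admits a decomposition into exactly $d=10$ regular ideal hyperbolic tetrahedra.

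As a first step I would decode each string to reconstruct $\G_i$ explicitly. According to the code convention recalled just before the statement, the two bipartition classes are $\{-1,\ldots,-6\}$ and $\{+1,\ldots,+6\}$, the $0$-adjacencies are $-i\leftrightarrow +i$, and for each $c\in\{1,2,3\}$ and $i\in\{1,\ldots,6\}$ the $((c-1)\cdot 6+i)$-th character of the code encodes the vertex $c$-adjacent to $-i$ (capital letters for positive labels, lower-case for negative ones). This yields the $4$-colored graph $\G_i$ together with its associated pseudocomplex $K(\G_i)$, which is a $12$-tetrahedron ideal triangulation of $M_{\G_i}$.

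Next I would verify that each $M_{\G_i}$ is a compact $3$-manifold with toric boundary by inspecting the bicolored cycles of $\G_i$ (equivalently, the links of the vertices of $K(\G_i)$) and checking that every vertex is singular with a torus link; orientability follows from bipartiteness. I would then feed the triangulation $K(\G_i)$ to a tool such as SnapPy/Regina, simplify it by Pachner $2$\nobreakdash-$3$ and $3$\nobreakdash-$2$ moves to a candidate ideal triangulation with $10$ tetrahedra, and solve the gluing equations to check that the shape parameters are all equal to the regular value (equivalently, that the hyperbolic volume equals $10\,v_{tet}$). The simplest way to package this is to compute the isometry signature of each $M_{\G_i}$ and match it against an entry \verb'otet10_xxxx' of the tetrahedral census of \cite{[FGGTV]}, whose ideal triangulation is known to be assembled out of $10$ regular ideal tetrahedra.

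The main obstacle is precisely this identification step: the $12$-tetrahedron triangulation coming from the graph is not a priori the regular ideal one, so its tetrahedral nature is not combinatorially visible and must be exhibited via explicit simplification and comparison with the census. Once this computer-assisted identification yields $d=10$ for each of the three manifolds, Theorem \ref{twosidedbounds} applied with $k=12$ and $d=10$ gives the bounds $10n\le c_g(M_{G_i})\le 12n$ for all admissible $n$-fold coverings $G_i$ of $\G_i$, $i=1,2,3$, completing the proof.
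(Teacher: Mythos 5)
Your proposal is correct and follows essentially the same route as the paper: the upper bound is immediate from the vertex count, and the lower bound comes from identifying each $M_{\G_i}$ as a tetrahedral manifold built from $10$ regular ideal tetrahedra and then invoking Theorem \ref{twosidedbounds} with $k=12$, $d=10$. The only difference is that the paper outsources the identification step to \cite[Table 3]{[CFMT]} (where these codes are matched to \texttt{otet10\_00014}, \texttt{otet10\_00028}, \texttt{otet10\_00027}), whereas you describe carrying out that computer-assisted identification directly.
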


\begin{proof}
 It follows from \cite[Table 3]{[CFMT]} that $\G_1, \G_2$ and $\G_3$ represent the tetrahedral manifolds \verb'otet10_00014', \verb'otet10_00028' and \verb'otet10_00027' respectively, which are obtained by gluing together $10$ regular ideal tetrahedra (see details in \cite{[CFMT]}). The double inequality $\ \ 10n \leq c_{g}(M_{G_i}) \leq 12n\ \ $ now follows from Theorem \ref{twosidedbounds}. 
\end{proof}

\section{Manifolds of graph complexity $14$}\label{sec 4}

The previous section shows how it could be useful to have a census of (prime) $3$-manifolds represented by $4$-colored graphs. 

In \cite{[CFMT]}, all prime orientable $3$-manifolds with toric boundary representable by (bipartite) $4$-colored graphs with order $\leq 12$ have been classified.
 
In this section we extend the classification up to $14$ vertices of the associated graphs. 
Moreover, we show that all manifolds appearing in this census, except four, are complements of links in the $3$-sphere whose diagrams are also determined.

The classification has been obtained starting from the catalogues of graphs described in \cite{[CFMT]} by using the programs \verb"3-Manifold" \verb" Recognizer" \cite{[Recognizer]} and \verb"SnapPy" \cite{[SnapPy]} and following the procedure described in the same paper.

\begin{theorem}  
There exist exactly $34$ non-homeomorphic compact orientable prime $3$-manifolds with (possibly disconnected) toric boundary of graph complexity $14$, and exactly $30$ of them are complements of links in the $3$-sphere (see Table~\ref{tab:1}).
\end{theorem}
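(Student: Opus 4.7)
The plan is to proceed computationally, extending the pipeline already developed in \cite{[CFMT]} for graph complexity $\le 12$ up to the new bound of $14$. The result is essentially a census, so the proof will consist of describing the enumeration procedure, the filtering at each stage, and the identification of the resulting manifolds and their link presentations.

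First I would build (or inherit from \cite{[CFMT]}) the catalogue of all bipartite $4$-colored graphs with exactly $14$ vertices, up to the natural equivalence given by relabeling of vertices and permutation of the color set. This catalogue must be purged of (i) graphs not representing a $3$-manifold with toric boundary only (this requires checking, for each singular vertex of $K(\G)$, that its link is a torus; equivalently, that for each pair of colors $\{c,d\}\subset\C$ the bicolored subgraph behaves correctly around non-regular vertices), (ii) non-orientable representations (non-bipartite graphs are excluded by hypothesis, but one still checks $M_\G$), and (iii) non-prime manifolds. A further key filter is minimality: a graph on $14$ vertices must be discarded if the same manifold already appears in the tables of graph complexity $\le 12$ given in \cite{[CFMT]}. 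After this step one has a finite list of candidates; the quoted count $34$ then becomes a definite assertion to verify.

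Next I would identify the manifolds themselves. For each surviving graph $\G$, feed the associated pseudo-triangulation of $M_\G$ into \verb"3-Manifold Recognizer" \cite{[Recognizer]} and \verb"SnapPy" \cite{[SnapPy]}: the first gives a combinatorial recognition (typically via Matveev-style simplifications of special spines), the second supplies hyperbolic invariants (volume, cusp shapes, isometry signature) which in the hyperbolic case serve as a complete invariant. Cross-checking these outputs allows one to collapse the candidate list into homeomorphism classes and to match each class against named manifolds in the standard censuses (SnapPy's cusped census, the Callahan--Hildebrand--Weeks tables, etc.). This gives the $34$ non-homeomorphic manifolds.

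Finally, for the link-complement question, I would determine for each of the $34$ manifolds whether it is homeomorphic to $S^3\setminus L$ for some link $L$. In practice one uses the fact that SnapPy can identify many cusped hyperbolic manifolds with link complements directly (via Dehn filling descriptions and via its database of link exteriors in the Hoste--Thistlethwaite and related tables); where this fails, one constructs an explicit diagram by hand or via the Dehn-surgery description, or excludes the possibility by showing that the first homology is not of the form $\mathbb Z^{|\partial M|/T^2}$ (a necessary condition for a link complement in $S^3$). Exactly $30$ of the $34$ manifolds pass this test and admit an explicit diagram recorded in Table \ref{tab:1}, while the remaining $4$ are ruled out.

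The hard part will not be any single step in isolation but rather the sheer bookkeeping: guaranteeing that no $14$-vertex graph is missed in the enumeration, that minimality is correctly enforced against the $\le 12$ tables, and that homeomorphism classes are neither merged nor split mistakenly. The homeomorphism identification relies on the combined output of two independent software packages precisely to mitigate the last risk, while the link-complement recognition is the most delicate conceptually, since negative answers require a topological obstruction rather than just the failure of an automated search.
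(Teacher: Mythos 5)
Your proposal matches the paper's approach: the authors likewise obtain the classification computationally, starting from the graph catalogues of \cite{[CFMT]}, applying the same filtering and minimality checks against the complexity $\leq 12$ tables, and identifying the resulting manifolds with \verb"3-Manifold Recognizer" and \verb"SnapPy" following the procedure of that paper. The paper gives no further detail beyond this (the theorem is stated as the outcome of the census), so your more explicit description of the pipeline is consistent with, and indeed elaborates on, what the authors actually did.
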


In order to refer with precision to each manifold in our census, we use a notational system analogous to that used in the knot and link tables. 
For each $1\leq k\leq 5$, we sort in arbitrary order all $3$-manifolds with $k$ boundary components represented by a minimal $4$-colored graph with $14$ vertices, and we denote by $14_n^k$ the $n$-th manifold of this list.

Let us describe which kind of $3$-manifolds can be found in Table~\ref{tab:1}. 
\medskip

\textbf{Seifert manifolds.} A Seifert manifold will be denoted by $(F,(p_1,q_1),\ldots,(p_k,q_k))$, where $F$ is a compact surface with non-empty boundary, $k\geq 0$ and the coprime pairs of integers $(p_i,q_i)$, with $p_i\geq 2$, are the Seifert invariants of the exceptional fibers.

We point out that, by construction, any Seifert manifold with non-empty boundary is endowed with a coordinate system for each of its boundary tori, made by a pair of meridian/longitude suitably oriented. 

All Seifert manifolds appearing in our census, either as single manifolds or as components of a graph manifold, have either disks or M\"obius strips, possibly with holes, as base spaces and at most two exceptional fibers.

In Table~\ref{tab:1}, we denote by $D^2_i$ and $M^2_i$ the disc and the M\"obius strip with $i>0$ holes respectively.
 
\medskip

\textbf{Graph manifolds.} Graph manifolds of Waldhausen are obtained from Seifert manifolds by gluing them along boundary components. 
The structure of the $14$ graph manifolds arising in our census is very simple: each of them is obtained by gluing together either two or three Seifert manifolds as follows. 

\begin{itemize}
 \item Let $M, M'$ be two Seifert manifolds with non-empty boundaries equipped with fixed coordinate systems. Chosen arbitrary tori $T$ and $T'$ of $\partial M$ and $\partial M'$, respectively, let $f_A: T\to T'$, with $A=(a_{ij})\in GL_2(Z)$, be a homeomorphism that takes any curve of type $(m, n)$ on $T$ to a curve of type $(a_{11}m+a_{12}n, a_{21}m+a_{22}n)$ on $T'$. So we define $M\cup_A M'= M\cup_{f_A} M'$. 
\smallskip

 \item Let $M, M', M''$ be three Seifert manifolds with non-empty boundaries equipped with fixed coordinate systems. Chosen arbitrary tori: $T$ of $\partial M$, $T'_1$ and $T'_2$ of $\partial M'$ and $T''$ of $\partial M''$, let $f_A: T\to T'_1$, $f_B: T''\to T'_2$  be homeomorphisms corresponding to the matrices $A, B\in GL_2(Z)$ as above, then we define $M\cup_A M'\cup_B M''=M\cup_{f_A} M'\cup_{f_B} M''$. 

\end{itemize} 

\textbf{Hyperbolic manifolds.} Of the seven hyperbolic manifolds in our census, three ($14^3_9,\ 14^3_{10}$ and $14^4_{14}$), by removing their boundary, give rise to cusped hyperbolic $3$-manifolds that are contained in the orientable cusped census \cite{[CHW]} or in the censuses of Platonic manifolds of \verb"SnapPy". 

Therefore they are identified, in Table~\ref{tab:1}, by the notations of their corresponding cusped manifolds. 

\medskip

\textbf{Composite manifolds.} We call a $3$-manifold {\it composite} if its JSJ decomposition is non-trivial and contains a hyperbolic manifold. 
Each of the $10$ composite manifolds arising in our census is obtained by gluing together one hyperbolic manifold and either one or two Seifert manifolds as follows. 

\begin{itemize}
 \item Let $M$ be a Seifert manifold with non-empty boundary equipped with fixed coordinate systems as remarked above. 
 Let $M_L$ be a hyperbolic manifold, which is the complement of an open regular neighbourhood of a link $L = L_1\sqcup\ldots\sqcup L_r$ in $S^3$. 
 A preferred coordinate system for $\partial M_L$ can be also chosen in the following way. 
 On the regular neighbourhood of each $L_i$, considered as a knot in $S^3$, we choose a standard coordinate system formed, as usual, by the boundary of a meridian disk and a homologically trivial curve in the complement of $L_i$. 
 Therefore, once a boundary torus $T$ of $M$ and an $i$-th component $(\partial M_L)_i$ of $\partial M_L$ corresponding to $L_i$ are chosen, a homeomorphism $f_{A, i}\ :\ T\to (\partial M_L)_i$ can be described by means of a matrix $A\in GL_2(Z)$ as in the case of graph manifolds. 
 Finally, we denote by $M\cup_{A, i} M_L$ the manifold obtained by gluing $M$ and $M_L$ through the homeomorphism $f_{A, i}$. Since in Table~\ref{tab:1} each manifold $M_L$ is represented by a link with up to $8$ crossings, we numerate the components $L_1, \ldots , L_r$ of $L$ as they appear in its Gauss code displayed in the corresponding page of \cite{[KnotAtlas]}.
 \item {Given two Seifert manifolds with non-empty boundaries $M'$ and $M''$ and a hyperbolic manifold $M_L$ as above, we denote by $M\cup_{A, i} M_L\cup_{B, j} M''$ the manifold obtained by identifying two bondary tori of $M'$ and $M''$ with $(\partial M_L)_i$ and $(\partial M_L)_j$ respectively by the homeomorphisms $f_A$ and $f_B$ similarly to the previous case.}
\end{itemize}

All prime links appearing in Table \ref{tab:1} are contained in the Thistlethwaite link table up to $14$ crossings distributed with \verb"SnapPy"; they are identified through their Thistlethwaite name, that is of the form $L[k]a[j_1]$ or $L[k]n[j_2]$, depending on whether the link is alternating or not. Here $k$ is the crossing number and $j_1, j_2$ are archive numbers assigned to each $(a, k)$, $(n, k)$ pair, respectively.
All other links of Table  \ref{tab:1} are not prime and their diagrams are depicted in Figure \ref{links}.

\begin{longtable}{|>{\tiny}c||>{\tiny}c|>{\tiny}c|>{\tiny}c|>{\tiny}c|}
\caption{Orientable prime $3$-manifolds with toric boundary of graph complexity $14$} \label{tab:1} \\ 
  \hline
   Name & Code & Manifold & Link  \\
 \hline \hline \endhead
 \hline 
 $14^2_1$ & EABCDGFGDFEBCADGEFBAC  & $\seifuno {D^2_1}31$ & L6a3 \\  
 \hline
 $14^2_2$ & DABCGEFGFECDBABGDFACE & $\seifdue {D^2}2131 \bigu 1110 (D^2_2\times S^1)$ & see fig. \ref{links} \\ 
 \hline
 $14^2_3$ & GABCDEFEDGFABCDEFAGCB &  $\seifuno {D^2_1}21 \bigu 0110 \seifuno {D^2_1}21$ & L11n204 \\  
 \hline
 $14^3_1$ & EABCDGFGEFCADBCEGAFBD & $\seifuno {D^2_2}21$ & L12n1998 \\ 
 \hline
 $14^3_2$ & DABCGEFGFBADCEFCEAGDB & $\seifuno {M^2_2}10$ & -- \\  
 \hline
 $14^3_3$ & DABCGEFFDBECGAEDGCFAB & $\seifdue {D^2}2131 \bigu 1110 (D^2_3\times S^1)$ & see fig. \ref{links} \\  
 \hline
 $14^3_4$ & EABCDGFGDFEBCABDGAFEC & $\seifuno {D^2_1}21 \bigu 1211 (D^2_2\times S^1)$ & L8n6 \\ 
 \hline
  $14^3_5$ & DABCGEFGEFBDACFGEBACD & $\seifuno {D^2_1}31 \bigu 0110 (D^2_2\times S^1)$ & see fig. \ref{links} \\  
 \hline
 $14^3_6$ & EABCDGFGFDABECCEFAGDB & $\seifuno {M^2_1}10 \bigu 0110 (D^2_2\times S^1)$ & -- \\ 
 \hline
 $14^3_7$ & EABCDGFGFDABECFDGBACE & $\seifuno {D^2_1}21 \bigu 0110 (D^2_2\times S^1) \bigu 0110 \seifuno {D^2_1}21$ & see fig. \ref{links} \\ 
 \hline
 $14^3_8$ & DABCGEFGDFCABEBFDECGA & $(D^2_2\times S^1) \bigcup\nolimits_{{\tiny{\matr {0} {1} {1} {0}}}, 1} M_{L5a1}$ & L13n9356 \\ 
 \hline
 $14^3_9$ & DABCGEFGEFBDACFCGABDE & \verb't12066', \verb'ooct02_00003' & L8n5 \\ 
 \hline
 $14^3_{10}$ & DABCGEFGDFBACECFAEDGB & \verb't12067', \verb'ooct02_00005' & L6a4 \\ 
 \hline
 $14^4_1$ & EABCDGFGFBACEDBCFDGAE & $(D^2_2\times S^1) \bigu 0110 \seifuno {D^2_2}21$ & see fig. \ref{links} \\  
 \hline
 $14^4_2$ & EABCDGFGBEDFACEFAGCDB & $(D^2_2\times S^1) \bigu 0110 \seifuno {M^2_2}10$ & -- \\ 
 \hline
 $14^4_3$ & DABCGEFGCFADBEEGABCFD &  $\seifuno {D^2_1}21 \bigu 0110 (D^2_3\times S^1) \bigu 0110 \seifuno {D^2_1}21$ & see fig. \ref{links} \\  
 \hline
 $14^4_4$ & EABCDGFGEFCADBBFDGEAC & $(D^2_2\times S^1) \bigu 0110 \seifuno {D^2_1}21 \bigu {-1}21{-1} (D^2_2\times S^1)$ & L11n379 \\ 
 \hline
 $14^4_5$ & EABCDGFGFECABDCGDAFEB & $\seifuno {D^2_1}21 \bigu 1211 (D^2_2\times S^1) \bigu 0110 (D^2_2\times S^1)$ & see fig. \ref{links} \\  
 \hline
 $14^4_6$ & EABCDGFGDFACEBBFEDGAC & $\seifuno {M^2_1}10 \bigu 0110 (D^2_2\times S^1) \bigu 0110 (D^2_2\times S^1)$ & -- \\  
 \hline
 $14^4_7$ & EABCDGFGFDEBCAFDEGCAB & $\seifuno {D^2_1}21 \bigcup\nolimits_{{\tiny{\matr {1} {0} {0} {-1}}}, 1} M_{L8n7}$ & L14n63157 \\ 
 \hline
  $14^4_8$ & EABCDGFGFEBCDACGFEBAD & $\seifuno {D^2_1}21 \bigcup\nolimits_{{\tiny{\matr {0} {1} {1} {0}}}, 1} M_{L8n7}$ & L14n61549 \\  
 \hline
 $14^4_9$ & EABCDGFGDFEBCAFCGADEB & $(D^2_2\times S^1) \bigcup\nolimits_{{\tiny{\matr {0} {1} {1} {1}}}, 1} M_{L6a5}$ & L14n62850 \\ 
 \hline
 $14^4_{10}$ & EABCDGFGFBEACDDCGAFEB & $(D^2_2\times S^1) \bigcup\nolimits_{{\tiny{\matr {0} {1} {1} {0}}}, 3} M_{L8n5}$ & see fig. \ref{links} \\ 
 \hline
 $14^4_{11}$ & DABCGEFGEFBDACFGCABDE & $(D^2_2\times S^1) \bigcup\nolimits_{{\tiny{\matr {0} {1} {1} {0}}}, 1} M_{L8n5}$ & L14n62541 \\ 
 \hline
 $14^4_{12}$ & EABCDGFGEFBDACCGAEFDB & $(D^2_2\times S^1) \bigcup\nolimits_{{\tiny{\matr {0} {1} {1} {0}}}, 1}  M_{L6a4}$ & see fig. \ref{links} \\ 
 \hline
 $14^4_{13}$ & EABCDGFGEFBDACBGCEFDA & $(D^2_2\times S^1) \bigcup\nolimits_{{\tiny{\matr {0} {1} {1} {0}}}, 1} M_{L5a1} \bigcup\nolimits_{{\tiny{\matr {0} {1} {1} {0}}}, 2} (D^2_2\times S^1)$ & see fig. \ref{links} \\ 
 \hline
 $14^4_{14}$ & EABCDGFGDFACEBFCEGBAD & \verb'otet10_00011', \verb'ocube02_00044' & L8a21 \\ 
 \hline
 $14^4_{15}$ & EABCDGFGFDABECFDEGCAB & hyperbolic manifold with $\operatorname{Vol} = 10.6669791338$ & L14n60227  \\  
 \hline
 $14^4_{16}$ & EABCDGFGFEACBDCDFGAEB & hyperbolic manifold with $\operatorname{Vol} = 11.202941612$ & L10n96 \\   
 \hline
 $14^4_{17}$ & DABCGEFGEFBDACCGAFBDE & hyperbolic manifold with $\operatorname{Vol} = 12.8448530047$ & L11n456 \\  
 \hline
 $14^4_{18}$ & DABCGEFGEFBDACFGEACDB & hyperbolic manifold with $\operatorname{Vol} = 12.3173273072$ & L14n63000 \\  
 \hline
 $14^5_1$ & EABCDFGGFEBADCCDEGFAB & $(D^2_2\times S^1) \bigu 0110 (D^2_3\times S^1)$ & see fig. \ref{links} \\ 
 \hline 
 $14^5_2$ & DABCGEFGFBADCEECFGABD & $(D^2_2\times S^1) \bigcup\nolimits_{{\tiny{\matr {1} {0} {0} {-1}}}, 1} M_{L8n7}$ & L12n2249 \\ 
\hline
 $14^5_3$ & DABCGEFGCFADBECDEGAFB & $(D^2_2\times S^1) \bigcup\nolimits_{{\tiny{\matr {0} {1} {1} {0}}}, 1} M_{L8n7}$ & L14n63769 \\ 
 \hline 

\end{longtable}


\begin{figure*}[h]
\centering{
\includegraphics[height=0.11\textheight]{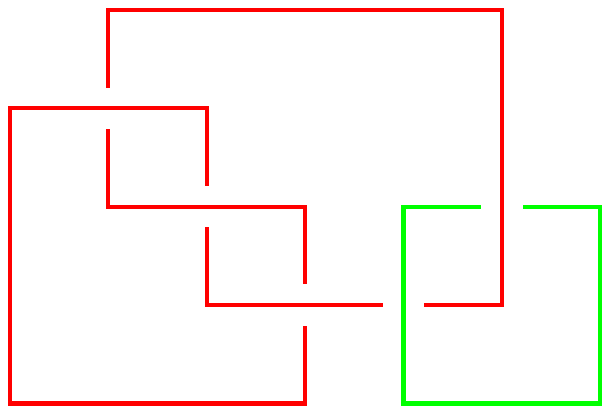}\hspace{10mm} 
\includegraphics[height=0.11\textheight]{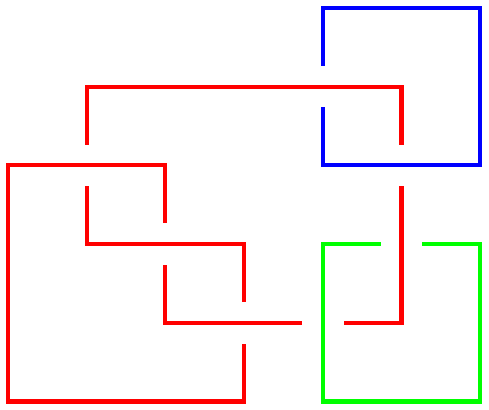}\hspace{10mm}
\includegraphics[height=0.11\textheight]{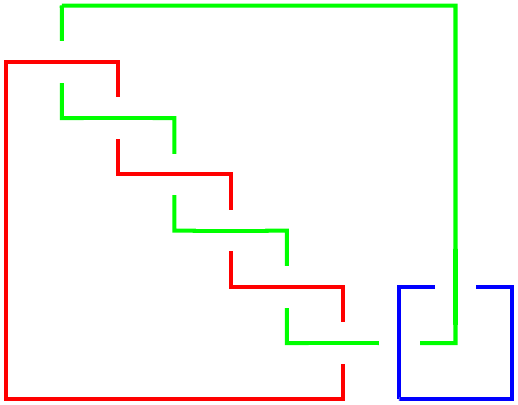}\hspace{10mm}
$$
14^2_2 \qquad \qquad \qquad \qquad \quad
14^3_3 \qquad \qquad \qquad \qquad \quad
14^3_5 
$$
\medskip
\\
\includegraphics[height=0.11\textheight]{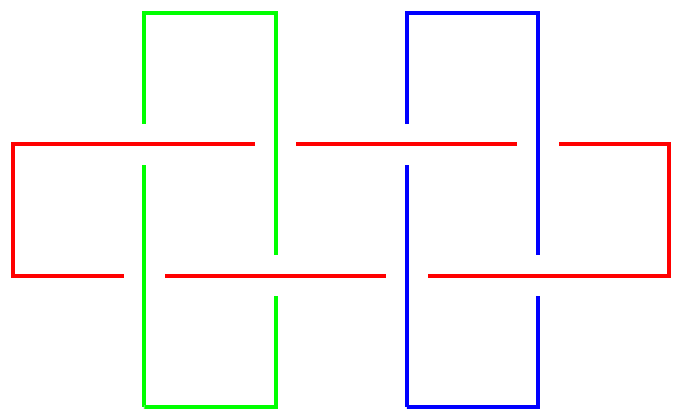}\hspace{10mm}
\includegraphics[height=0.11\textheight]{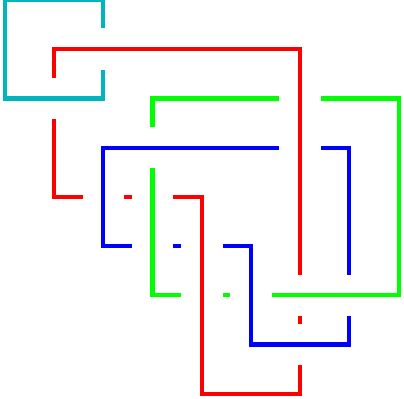}\hspace{10mm}
\includegraphics[height=0.11\textheight]{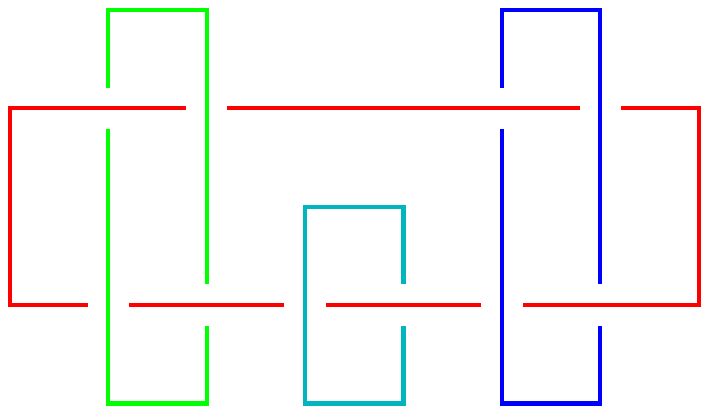}\hspace{10mm}
$$
14^3_7 \qquad \qquad \qquad \qquad \quad
14^4_1 \qquad \qquad \qquad \qquad \quad
14^4_3
$$
\medskip
\\
\includegraphics[height=0.11\textheight]{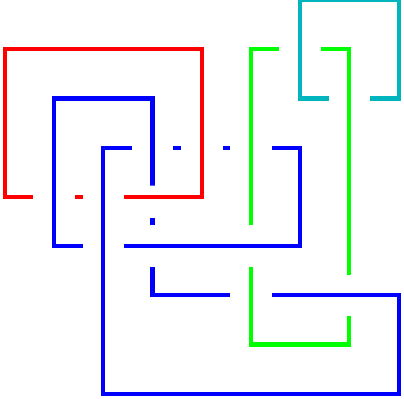}\hspace{10mm} 
\includegraphics[height=0.11\textheight]{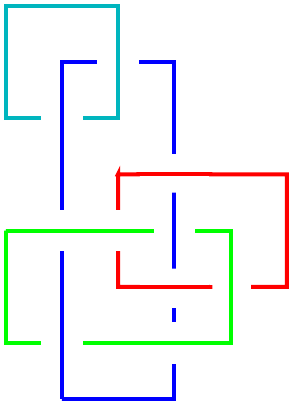}\hspace{10mm}
\includegraphics[height=0.11\textheight]{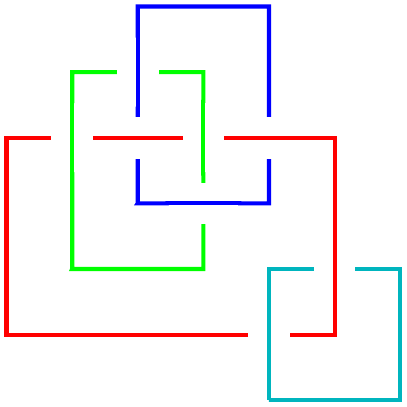}\hspace{10mm}
$$
14^4_5  \qquad \qquad \qquad \qquad \quad
14^4_{10}  \qquad \qquad \qquad \qquad \quad
14^4_{12} 
$$
\\
\includegraphics[height=0.12\textheight]{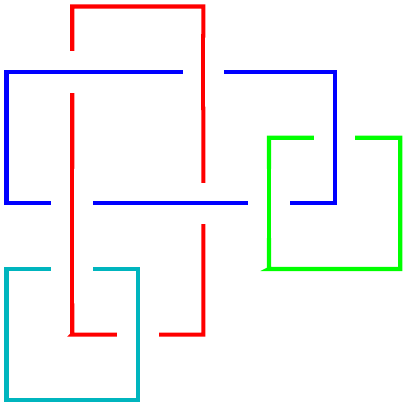}\hspace{10mm} 
\includegraphics[height=0.12\textheight]{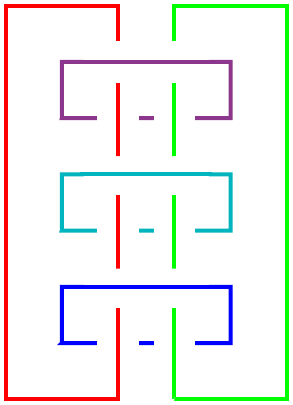}\hspace{10mm}
$$
14^4_{13}  \qquad \qquad \qquad \qquad \quad
14^5_1  
$$
\caption{Non-prime links with complements represented by $4$-colored graphs of order $14.$}
\label{links}
}
\end{figure*}

\textbf{Acknowledgements:}  P. Cristofori and M. Mulazzani have been supported by the National Group for
Algebraic and Geometric Structures, and their Applications'' (GNSAGA-INdAM), the University of Modena and Reggio Emilia and the University of Bologna, funds for selected research topics.
E. Fominykh and V. Tarkaev have been supported by RFBR (grant number 16-01-00609).
\medskip

\end{document}